\newtheorem{theorem}{Theorem}[section]
\newtheorem{lemma}[theorem]{Lemma}
\theoremstyle{definition}
\newtheoremstyle{named}{}{}{\itshape}{}{\bfseries}{.}{.5em}{\thmnote{#3's }#1} \theoremstyle{named} 
\theoremstyle{remark}
\numberwithin{equation}{section}
\numberwithin{equation}{section}
\title{Existence of a lower bound for the distance between point masses of relative equilibria in $\mathbb{S}^{k-1}$, $k\geq 3$. }
\author{Pieter Tibboel$^\ast$}
\begin{document}
\maketitle
\begin{abstract}
  We prove that if for the curved $n$-body problem in $\mathbb{S}^{k-1}$, $k\geq 3$, the masses are given, the minimum distance between the point masses of a specific type of relative equilibrium solution that is a generalisation of positive elliptic relative equilibria and positive elliptic-elliptic relative equilibria has a universal lower bound that is not equal to zero.
\end{abstract}
\begin{description}

\item \hspace*{3.8mm}$\ast$ Department of Mathematics, City University of
Hong Kong, Hong Kong. \\
Email: \texttt{ptibboel@cityu.edu.hk}

\end{description}

\newpage

\section{Introduction. }
  By $n$-body problems we mean problems where the goal is to find the motion of $n$ point particles. Relative equilibria are solutions to such problems where the point particles represent rotating configurations of fixed size and shape. The Newtonian $n$-body problem is the problem of finding the dynamics of $n$ point particles in Euclidean space under the influence of Newton's gravitational law and the \textit{$n$-body problem in spaces of constant curvature} is an extension of that $n$-body problem into spaces of nonzero, constant Gaussian curvature, which means that the space is either spherical (if the curvature is positive), or hyperbolical (if the curvature is negative) (see \cite{DPS1}, \cite{DPS2} and \cite{DPS3}).

Research into $n$-body problems for spaces of constant Gaussian curvature goes back as far as the 1830s, when Bolyai and Lobachevsky (see \cite{BB} and \cite{Lo} respectively) independently proposed a curved 2-body problem in hyperbolic space $\mathbb{H}^{3}$. Since then, $n$-body problems in spaces of constant Gaussian curvature have been investigated by mathematicians such as Dirichlet, Schering (see \cite{S1}, \cite{S2}), Killing (see \cite{K1}, \cite{K2}, \cite{K3}), Liebmann (see \cite{L1}, \cite{L2}, \cite{L3}) and Kozlov and Harin (see \cite{KH}). However, the succesful study of $n$-body problems in spaces of constant Gaussian curvature for the case that $n\geq 2$ began with \cite{DPS1}, \cite{DPS2}, \cite{DPS3}  by Diacu, P\'erez-Chavela and Santoprete. After this breakthrough, further results for the $n\geq 2$ case were then obtained by Cari\~nena, Ra\~nada, Santander (see \cite{CRS}), Diacu (see \cite{D1}, \cite{D2}, \cite{D3}, \cite{D4}, \cite{D5}), Diacu, Kordlou (see \cite{DK}), Diacu, P\'erez-Chavela (see \cite{DP}), Diacu, Thorn (see \cite{DT}) and Tibboel (see \cite{T}, \cite{T2}, \cite{T3}). For a more detailed historical overview, please see \cite{D2}, \cite{D3}, \cite{D4}, \cite{D6}, \cite{DK}, or \cite{DPS1}.

For the Newtonian $n$-body problem, M. Shub proved that if the masses of the $n$-body problem are known, then the set of all equivalence classes of relative equilibria (an equivalence class here being all relative equilibria that are equivalent under rotation and scalar multiplication) that exist for those masses is compact. As a direct consequence, Shub proved that there exists a universal nonzero lower bound for the distance between the point particles of each such a relative equilibrium if both the masses and the angular velocity (i.e. the speed at which a relative equilibrium rotates) are fixed (see \cite{Shub}). Shub's results were a potential first step in what may lead to a proof of the famous sixth Smale problem (see \cite{Smale}) which states a stronger result, namely that such sets of equivalence classes of relative equilibria are in fact, finite.

In \cite{T3}, Shub's results were proven for a class of relative equilibria that include all relative equilibria in the spaces $\mathbb{S}^{2}$ and $\mathbb{H}^{2}$ and a subclass of the positive elliptic relative equilibria in $\mathbb{S}^{3}$ as defined in \cite{D3} and a subclass of the negative elliptic relative equilibria in $\mathbb{H}^{3}$ as defined in \cite{D3}. In this paper, we will prove Shub's results for a generalisation of the classes of positive elliptic relative equilibria and positive elliptic-elliptic relative equilibria (see \cite{D3}) in $\mathbb{S}^{3}$ and as a consequence prove Shub's results for all classes of relative equilibria in $\mathbb{S}^{3}$. To that extent, we will first formulate the system of differential equations that describes the $n$-body problem we will consider in this paper, after which we will specify our main theorem:

In \cite{D4} and \cite{DK} it was shown that the $n$-body problem in spaces of constant curvature is equivalent with the $n$-body problem in spaces of constant curvature for which it is assumed that the curvature is equal to either $+1$, or $-1$. The $n$-body problem in spaces of constant curvature can then be described, defining the space
  \begin{align*}
    \mathbb{M}_{\sigma}^{k-1}=\{(x_{1},....,x_{k})\in\mathbb{R}^{k}|x_{1}^{2}+...+x_{k-1}^{2}+\sigma x_{k}^{2}=\sigma\},
  \end{align*}
  where $\sigma$ equals either $+1$, or $-1$
  and for $x$, $y\in\mathbb{M}_{\sigma}^{k}$
  \begin{align*}
    x\odot_{k} y=x_{1}y_{1}+...+x_{k-1}y_{k-1}+\sigma x_{k}y_{k},
  \end{align*}
  following \cite{DPS1}, \cite{DPS2}, \cite{DPS3}, \cite{D3} and \cite{DK},
  as the system of differential equations
  \begin{align}\label{EquationsOfMotion}
   \ddot{q}_{i}=\sum\limits_{j=1,\textrm{ }j\neq i}^{n}\frac{m_{j}(q_{j}-\sigma(q_{i}\odot_{k} q_{j})q_{i})}{(\sigma-\sigma(q_{i}\odot_{k} q_{j})^{2})^{\frac{3}{2}}}-\sigma(\dot{q}_{i}\odot_{k}\dot{q}_{i})q_{i},\textrm{ }i\in\{1,...,n\},
  \end{align}
  where $q_{1}$,..., $q_{n}\in\mathbb{M}_{\sigma}^{k-1}$ are the coordinate vectors of the $n$ point particles of the problem with respective masses $m_{1}$,...,$m_{n}$, $k\geq 2$. However, as we are considering relative equilibria on $\mathbb{S}^{k-1}$, we may assume that $\sigma=1$ and thus the inner product becomes the Euclidean inner product $\langle\cdot,\cdot\rangle_{k}$ on $\mathbb{R}^{k}$. \\
  With our $n$-body problem in place, our next step is to formulate the class of relative equilibria we will use in this paper:

  Let
  \begin{align*}
    T(t)=\begin{pmatrix}
      \cos{t} & -\sin{t} \\
      \sin{t} & \cos{t}
    \end{pmatrix}
  \end{align*}
and let $I$ be the $2\times 2$ identity matrix. \\
Let $p\in\mathbb{N}$ and $A=(A_{1},...,A_{p})\in\mathbb{R}^{p}$. Let
\begin{align*}
  T_{k}(At)=\begin{pmatrix}
    T(A_{1}t)&\hdots & 0\\
    \vdots & \ddots & 0 \\
    0&\hdots & T(A_{p}t)
  \end{pmatrix}\textrm{ if }k=2p\\ \textrm{ and }T_{k}(At)=\begin{pmatrix}
    T(A_{1}t)&\hdots & 0 & 0\\
    \vdots & \ddots & \vdots &\vdots\\
    0 & \hdots & T(A_{p}t) & 0\\
    0 & \hdots & 0 & 1
  \end{pmatrix}\textrm{ if }k=2p+1.
\end{align*}
Then if $Q_{1}$,...,$Q_{n}\in\mathbb{S}^{k-1}$, we call any solution $q_{1}(t)=T_{k}(At)Q_{1}$,...,$q_{n}(t)=T_{k}(At)Q_{n}$ of (\ref{EquationsOfMotion}) a \textit{relative equilibrium}. Let $\|\cdot\|_{k}$ be the Euclidean norm on $\mathbb{R}^{k}$. We will prove that
\begin{theorem}\label{Main circle theorem}
  There exists a $c>0$ such that $\|Q_{i}-Q_{j}\|_{k}>c$ for all $i$, $j\in\{1,...,n\}$, $i\neq j$, for all relative equilibrium solutions $q_{1}(t)=T_{k}(At)Q_{1}$,...,$q_{n}(t)=T_{k}(At)Q_{n}$ of (\ref{EquationsOfMotion}) if $A$ and $m_{1}$,...,$m_{n}$ are given.
\end{theorem}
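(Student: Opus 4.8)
The plan is to convert Theorem~\ref{Main circle theorem} into an algebraic statement about the configuration $Q=(Q_1,\dots,Q_n)\in(\mathbb{S}^{k-1})^n$ and then to rule out collisions by a compactness argument driven by a virial-type identity on clusters of colliding masses.

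First I would substitute $q_i(t)=T_k(At)Q_i$ into \eqref{EquationsOfMotion}. Writing $\dot T_k(At)=BT_k(At)$, where $B$ is the constant skew-symmetric block matrix with $j$-th block $A_j\left(\begin{smallmatrix}0&-1\\1&0\end{smallmatrix}\right)$ (and a trailing $0$ when $k$ is odd), one has $\ddot T_k(At)=B^2T_k(At)$ with $B^2=\mathrm{diag}(-A_1^2,-A_1^2,\dots,-A_p^2,-A_p^2,[0])$; in particular $B^2$ commutes with $T_k(At)$ and $\|B^2v\|_k\le K\|v\|_k$ for $K:=\max_jA_j^2$. Because $T_k(At)$ is orthogonal, the quantities $\langle Q_i,Q_j\rangle_k$ and $\dot q_i\odot_k\dot q_i=-\langle Q_i,B^2Q_i\rangle_k$ are independent of $t$, so cancelling $T_k(At)$ reduces \eqref{EquationsOfMotion} to
\begin{align}\label{RE}
B^2Q_i-\langle Q_i,B^2Q_i\rangle_kQ_i=\sum_{j\neq i}\frac{m_j\left(Q_j-\langle Q_i,Q_j\rangle_kQ_i\right)}{\left(1-\langle Q_i,Q_j\rangle_k^2\right)^{3/2}},\qquad i\in\{1,\dots,n\}.
\end{align}
The left-hand side is the orthogonal projection of $B^2Q_i$ onto $T_{Q_i}\mathbb{S}^{k-1}$ and therefore has $\|\cdot\|_k$-norm at most $2K$, uniformly over all relative equilibria with the given $A$.

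Suppose the theorem fails. Then there is a sequence of solutions $Q^{(\ell)}$ of \eqref{RE} with $\min_{i\neq j}\|Q_i^{(\ell)}-Q_j^{(\ell)}\|_k\to0$. As $(\mathbb{S}^{k-1})^n$ is compact I would pass to a subsequence with $Q_i^{(\ell)}\to Q_i^\ast$ for every $i$; the limit has at least one coinciding pair. Grouping indices by the equivalence relation generated by $Q_i^\ast=Q_j^\ast$, fix a cluster $C$ with $|C|\ge2$ and common limit $P$, and note that masses in distinct clusters stay a distance $\ge\eta>0$ apart for all large $\ell$. Let $\pi$ be the orthogonal projection onto $P^\perp=T_P\mathbb{S}^{k-1}$ and put $y_i:=\pi(Q_i^{(\ell)})$, so $\|y_i\|_k\to0$ for $i\in C$. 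The key step is to take the $\langle y_i,\cdot\rangle_k$-pairing of the $i$-th equation in \eqref{RE}, sum over $i\in C$ with weights $m_i$, and split the right-hand side into intra-cluster ($j\in C$) and inter-cluster ($j\notin C$) parts. The left-hand side is $O(\max_{i\in C}\|y_i\|_k)\to0$ by the bound $2K$, and each inter-cluster term is at most $\|y_i\|_k\,m_j/(1-\langle Q_i,Q_j\rangle_k^2)\le\|y_i\|_k\,m_j/(\eta^2(1-\eta^2/4))\to0$. For the intra-cluster part I would pair the $(i,j)$ and $(j,i)$ contributions: with $c_i:=\langle Q_i,P\rangle_k$, $s:=\langle Q_i,Q_j\rangle_k$, $a:=\|y_i\|_k^2$ and $b:=\|y_j\|_k^2$, a direct computation gives
\begin{align}\label{Bracket}
\langle y_i,Q_j-sQ_i\rangle_k+\langle y_j,Q_i-sQ_j\rangle_k&=(c_i^2+c_j^2)s-2c_ic_j\notag\\
&=\left(\sqrt{1-a}-\sqrt{1-b}\right)^2-\tfrac12(2-a-b)\|Q_i-Q_j\|_k^2.
\end{align}
Since $\left|a-b\right|\le\|Q_i-Q_j\|_k(\|y_i\|_k+\|y_j\|_k)$, the first term on the right of \eqref{Bracket} is of smaller order than the second, so the whole bracket is $\le-\tfrac12\|Q_i-Q_j\|_k^2$ for all large $\ell$; together with $(1-s^2)^{3/2}\le\|Q_i-Q_j\|_k^3$ this makes each intra-cluster pair contribute at most $-m_im_j/(2\|Q_i-Q_j\|_k)$. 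Hence the intra-cluster part is bounded above by $-m_am_b/(2\|Q_a-Q_b\|_k)\to-\infty$ for a collapsing pair $a,b\in C$, forcing the impossible identity $0=-\infty$.

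The main obstacle is this last step: proving that neither the curvature of $\mathbb{S}^{k-1}$ nor the forces from masses outside $C$ can cancel the divergence produced by a collapsing pair. Identity \eqref{Bracket} is precisely what controls this, since after the symmetric pairing it exhibits the internal virial as the flat-space expression $-\tfrac12 m_im_j/\|Q_i-Q_j\|_k$ up to a term of provably lower order, while the external contributions and the left-hand side of \eqref{RE} are killed by the factor $\|y_i\|_k\to0$. A secondary point to check is that the intra-cluster sum cannot be rescued by internal cancellation; this is automatic because, after pairing, every summand is nonpositive, so a single collapsing pair already drives the sum to $-\infty$.
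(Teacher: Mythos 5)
Your proposal is correct in substance but takes a genuinely different route from the paper, and the difference is worth spelling out. The paper works only with the equation for a single body of the collapsing cluster: after Lemma~\ref{Lemma for circle} it rotates $Q_{1p}$ to $e_{1}$, drops the first coordinate, and takes the squared norm of the resulting cluster force, arriving at \eqref{EquationsOfMotion9}; the blow-up of the right-hand side there hinges on the assertion that $\cos{\alpha_{ijp}}>\frac{1}{2}$ for $p$ large, i.e.\ that the tangential directions $\widehat{V}_{ip}$ of the collapsing bodies, as seen from body $1$, eventually align. That is the delicate point of the paper's argument (nothing a priori prevents two bodies from collapsing onto body $1$ from opposite sides, which would make cross terms in \eqref{EquationsOfMotion9} negative), and your symmetrized cluster pairing avoids it entirely: by summing $m_{i}\langle y_{i},\cdot\rangle_{k}$ over the whole cluster and combining the $(i,j)$ and $(j,i)$ contributions through your bracket identity, every intra-cluster summand becomes bounded above by $-m_{i}m_{j}/(2\|Q_{i}-Q_{j}\|_{k})$, hence nonpositive, so no directional cancellation can rescue the sum and a single collapsing pair forces divergence. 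This is essentially Shub's virial argument transplanted to the sphere, and it is more robust than the paper's single-body estimate; I verified your identity (with $c_{i}=\langle Q_{i},P\rangle_{k}>0$ for large $\ell$ one indeed gets $(c_{i}^{2}+c_{j}^{2})s-2c_{i}c_{j}=(c_{i}-c_{j})^{2}-\frac{1}{2}(2-a-b)\|Q_{i}-Q_{j}\|_{k}^{2}$, and $|a-b|\leq\|Q_{i}-Q_{j}\|_{k}(\|y_{i}\|_{k}+\|y_{j}\|_{k})$ makes the first term of smaller order), as well as the bounds $(1-s^{2})^{3/2}\leq\|Q_{i}-Q_{j}\|_{k}^{3}$ and $\|Q_{j}-sQ_{i}\|_{k}=\sqrt{1-s^{2}}$ underlying your inter-cluster estimate. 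One shared caveat: both you and the paper control the far bodies by claiming $1-\langle Q_{i},Q_{j}\rangle_{k}^{2}$ stays away from zero whenever $\|Q_{i}-Q_{j}\|_{k}\geq\eta$; since $1-s^{2}=\frac{1}{4}d^{2}(4-d^{2})$ with $d=\|Q_{i}-Q_{j}\|_{k}$, this fails if an outside body tends to the antipode $-P$ (i.e.\ $d\to 2$), so your explicit bound $\eta^{2}(1-\eta^{2}/4)$ is wrong near antipodal limits --- but the paper glosses over exactly the same degeneracy (its claim that $1-V_{jp1}^{2}$ does not go to zero ``because $V_{jp}-e_{1}$ does not go to zero'' only excludes $V_{jp}\to e_{1}$, not $V_{jp}\to -e_{1}$), so your proof is exactly as complete as the paper's on this point, and strictly stronger on the intra-cluster step.
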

Before we can prove Theorem~\ref{Main circle theorem}, we will need to introduce further terminology and prove a crucial lemma, which will be done in section~\ref{Lemma section}. Theorem~\ref{Main circle theorem} will then subsequently be proved in section~\ref{Proof of Main Theorem}.
\section{Background theory}\label{Lemma section}
In this section we will introduce further terminology and a lemma needed to prove Theorem~\ref{Main circle theorem}:

Define for $A=(A_{1},...,A_{p})\in\mathbb{R}^{p}$
\begin{align*}
    \mathbf{A}_{k}=\begin{pmatrix}
      A_{1} & 0 & \hdots & 0 & 0 \\
      0 & A_{1} & \hdots & 0 & 0 \\
      \vdots & \vdots & \ddots & \vdots & \vdots \\
      0 & 0 & \hdots & A_{p} & 0\\
      0 & 0 & \hdots & 0 & A_{p}\\
    \end{pmatrix}\textrm{ if }k=2p\textrm{ and }\mathbf{A}_{k}=\begin{pmatrix}
      A_{1} & 0 & \hdots & 0 & 0 & 0 \\
      0 & A_{1} & \hdots & 0 & 0 & 0\\
      \vdots & \vdots & \ddots & \vdots & \vdots & \vdots\\
      0 & 0 & \hdots & A_{p} & 0 & 0\\
      0 & 0 & \hdots & 0 & A_{p} & 0\\
      0 & 0 & \hdots & 0 & 0 & 0
    \end{pmatrix}\textrm{ if }k=2p+1.
  \end{align*}
Then
\begin{lemma}\label{Lemma for circle}
  If $q_{1}(t)=T_{k}(At)Q_{1}$,...,$q_{n}(t)=T_{k}(At)Q_{n}$ is a relative equilibrium solution of (\ref{EquationsOfMotion}), then
  \begin{align}\label{Lemma Power}
   -\mathbf{A}^{2}Q_{i}=\sum\limits_{j=1,\textrm{ }j\neq i}^{n}\frac{m_{j}(Q_{j}-\langle Q_{i},Q_{j}\rangle Q_{i})}{(1-\langle Q_{i},Q_{j}\rangle^{2})^{\frac{3}{2}}}-\|\mathbf{A}Q_{i}\|_{k}^{2}Q_{i},\textrm{ }i\in\{1,...,n\},
\end{align}
\end{lemma}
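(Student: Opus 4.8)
The plan is to reduce the time-dependent equation \eqref{EquationsOfMotion} to the algebraic relation \eqref{Lemma Power} among the fixed vectors $Q_{1},\dots,Q_{n}$ by exploiting that $T_{k}(At)$ is an orthogonal one-parameter family whose infinitesimal generator is closely tied to $\mathbf{A}_{k}$.

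First I would record the derivatives of the orbit $q_{i}(t)=T_{k}(At)Q_{i}$. Writing $J=\begin{pmatrix}0 & -1 \\ 1 & 0\end{pmatrix}$ and letting $\mathbf{W}_{k}$ denote the block-diagonal matrix obtained from $\mathbf{A}_{k}$ by replacing each scalar block $A_{\ell}I$ with $A_{\ell}J$ (keeping the trailing $0$ in the odd case), one has $T(A_{\ell}t)=\exp(A_{\ell}tJ)$ blockwise, hence $\dot{T}_{k}(At)=\mathbf{W}_{k}T_{k}(At)=T_{k}(At)\mathbf{W}_{k}$ and $\ddot{T}_{k}(At)=\mathbf{W}_{k}^{2}T_{k}(At)$. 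Consequently $\dot{q}_{i}=T_{k}(At)\mathbf{W}_{k}Q_{i}$ and $\ddot{q}_{i}=\mathbf{W}_{k}^{2}T_{k}(At)Q_{i}$. The two identities I would then verify blockwise are $\mathbf{W}_{k}^{2}=-\mathbf{A}_{k}^{2}$ (since $J^{2}=-I$) and $\|\mathbf{W}_{k}v\|_{k}=\|\mathbf{A}_{k}v\|_{k}$ for every $v$ (since $J$ is orthogonal, $\|A_{\ell}Jv_{\ell}\|=|A_{\ell}|\,\|v_{\ell}\|=\|A_{\ell}v_{\ell}\|$ on each $2$-block). These give $\ddot{q}_{i}=-\mathbf{A}_{k}^{2}T_{k}(At)Q_{i}$ and $\|\dot{q}_{i}\|_{k}^{2}=\|\mathbf{W}_{k}T_{k}(At)Q_{i}\|_{k}^{2}=\|\mathbf{W}_{k}Q_{i}\|_{k}^{2}=\|\mathbf{A}_{k}Q_{i}\|_{k}^{2}$, using orthogonality of $T_{k}(At)$ for the middle equality.

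Next I would use orthogonality to simplify the right-hand side of \eqref{EquationsOfMotion}. Since $T_{k}(At)$ preserves the Euclidean inner product, $\langle q_{i},q_{j}\rangle=\langle Q_{i},Q_{j}\rangle$, so the denominators $(1-\langle q_{i},q_{j}\rangle^{2})^{3/2}$ are time-independent and equal to their $Q$-counterparts. By linearity, $q_{j}-\langle q_{i},q_{j}\rangle q_{i}=T_{k}(At)(Q_{j}-\langle Q_{i},Q_{j}\rangle Q_{i})$ and $\|\dot{q}_{i}\|_{k}^{2}q_{i}=\|\mathbf{A}_{k}Q_{i}\|_{k}^{2}T_{k}(At)Q_{i}$. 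Substituting all of this into \eqref{EquationsOfMotion}, and using that $\mathbf{A}_{k}^{2}$ commutes with $T_{k}(At)$ (both are block diagonal with commuting blocks, the blocks of $\mathbf{A}_{k}^{2}$ being scalar), every term acquires a common left factor $T_{k}(At)$.

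Finally, because $T_{k}(At)$ is invertible for each $t$, I would cancel this common factor by applying $T_{k}(At)^{-1}$ to both sides, obtaining exactly \eqref{Lemma Power}. The computation is otherwise routine bookkeeping; the one point requiring care, which I expect to be the crux, is that $\mathbf{A}_{k}$ as defined is not itself the generator of the rotation but rather its ``modulus'', so the substitution must pass through $\mathbf{W}_{k}$ together with the two identities $\mathbf{W}_{k}^{2}=-\mathbf{A}_{k}^{2}$ and $\|\mathbf{W}_{k}v\|_{k}=\|\mathbf{A}_{k}v\|_{k}$. The odd-dimensional case $k=2p+1$ requires only the observation that the extra fixed coordinate is left unchanged by $T_{k}(At)$ and annihilated by both $\mathbf{A}_{k}$ and $\mathbf{W}_{k}$, so it is consistent with the same argument.
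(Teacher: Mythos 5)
Your proposal is correct and takes essentially the same route as the paper: substitute $q_{i}(t)=T_{k}(At)Q_{i}$ into \eqref{EquationsOfMotion}, use $(T_{k}(At))''=-\mathbf{A}_{k}^{2}T_{k}(At)$ together with the orthogonality of $T_{k}(At)$ (so inner products and the velocity norm reduce to their $Q$-counterparts), factor $T_{k}(At)$ out of every term, and cancel it by invertibility. Your explicit use of the generator $\mathbf{W}_{k}$ with $\mathbf{W}_{k}^{2}=-\mathbf{A}_{k}^{2}$ and $\|\mathbf{W}_{k}v\|_{k}=\|\mathbf{A}_{k}v\|_{k}$ is merely a more careful rendering of the paper's shorthand $(T(A_{l}t))'=A_{l}T(A_{l}t)$, which read literally omits the skew factor $J$ that your bookkeeping correctly supplies.
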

\begin{proof}
  Inserting $q_{i}(t)=T_{k}(At)Q_{i}$, $q_{j}(t)=T_{k}(At)Q_{j}$ into (\ref{EquationsOfMotion})
  and using that $(T_{k}(At))''=-\mathbf{A}^{2}T_{k}(At)$ and $(T(A_{l}t))'=A_{l}T(A_{l}t)$, $l\in\{1,...,p\}$ and that for $x$, $y\in\mathbb{R}^{k}$ $\langle T_{k}(At)x,T_{k}(At)y\rangle=\langle x,y\rangle$ gives
  \begin{align}\label{EquationsOfMotion2}
   -\mathbf{A}^{2}T_{k}(At)Q_{i}=\sum\limits_{j=1,\textrm{ }j\neq i}^{n}\frac{m_{j}(T_{k}(At)Q_{j}-\langle Q_{i},Q_{j}\rangle T_{k}(At)Q_{i})}{(1-\langle Q_{i},Q_{j}\rangle^{2})^{\frac{3}{2}}}-\|\mathbf{A}Q_{i}\|_{k}^{2}T_{k}(At)Q_{i},\textrm{ }i\in\{1,...,n\}.
\end{align}
Multiplying both sides of (\ref{EquationsOfMotion2}) from the left with $T(-At)$ then gives (\ref{Lemma Power}).
\end{proof}
\section{Proof of Theorem~\ref{Main circle theorem}}\label{Proof of Main Theorem}
\begin{proof}
  Assume the contrary to be true. Then there exist relative equilibrium solutions $q_{1p}=T_{k}(At)Q_{1p}$,...,$q_{np}=T_{k}(At)Q_{np}$, $p\in\mathbb{N}$, for which there exist $i_{1}$, $i_{2}\in\{1,...,n\}$, $i_{1}$, $i_{2}$ such that $\lim\limits_{p\rightarrow\infty}\|Q_{i_{1}p}-Q_{i_{2}p}\|_{k}=0$.
  Renumbering the $\{Q_{ip}\}_{p=1}^{\infty}$ in terms of $i$ if necessary, we may assume that $\lim\limits_{p\rightarrow\infty}\|Q_{ip}-Q_{jp}\|_{k}=0$ for $i$, $j\in\{1,...,l\}$, $l\leq n$ and $\lim\limits_{p\rightarrow\infty}\|Q_{ip}-Q_{jp}\|_{k}\neq 0$ if $i\in\{1,...,l\}$ and $j\notin\{1,...,l\}$.
  Because of Lemma~\ref{Lemma Power}, we have that
  \begin{align*}
     -\mathbf{A}^{2}Q_{1p}=\sum\limits_{j=2}^{n}\frac{m_{j}(Q_{jp}-\langle Q_{1p},Q_{jp}\rangle Q_{1p})}{(1-\langle Q_{1p},Q_{jp}\rangle^{2})^{\frac{3}{2}}}-\|\mathbf{A}Q_{1p}\|_{k}^{2}Q_{1p},
  \end{align*}
  which can be rewritten as
  \begin{align}\label{EquationsOfMotion4}
     -\mathbf{A}^{2}Q_{1p}=\sum\limits_{j=2}^{l}\frac{m_{j}(Q_{jp}-\langle Q_{1p},Q_{jp}\rangle Q_{1p})}{(1-\langle Q_{1p},Q_{jp}\rangle^{2})^{\frac{3}{2}}}+\sum\limits_{j=l+1}^{n}\frac{m_{j}(Q_{jp}-\langle Q_{1p},Q_{jp}\rangle Q_{1p})}{(1-\langle Q_{1p},Q_{jp}\rangle^{2})^{\frac{3}{2}}}-\|\mathbf{A}Q_{1p}\|_{k}^{2}Q_{1p}.
  \end{align}
  As $Q_{1p}\in\mathbb{S}^{k-1}$, there exists a rotation matrix $R_{p}$ such that \begin{align*}R_{p}Q_{1p}=\begin{pmatrix}
    1\\
    0\\
    \vdots\\
    0
  \end{pmatrix}=:e_{1}\in\mathbb{R}^{k}.
  \end{align*}
  Multiplying both sides of (\ref{EquationsOfMotion4}) with $R_{p}$ then gives, as $R_{p}$ is a rotation matrix,
  \begin{align}\label{EquationsOfMotion5}
     -R_{p}\mathbf{A}^{2}R_{p}^{-1}e_{1}&=\sum\limits_{j=2}^{l}\frac{m_{j}(R_{p}Q_{jp}-\langle e_{1},R_{p}Q_{jp}\rangle e_{1})}{(1-\langle e_{1},R_{p}Q_{jp}\rangle^{2})^{\frac{3}{2}}}\\
     &+\sum\limits_{j=l+1}^{n}\frac{m_{j}(R_{p}Q_{jp}-\langle e_{1},R_{p}Q_{jp}\rangle e_{1})}{(1-\langle e_{1},R_{p}Q_{jp}\rangle^{2})^{\frac{3}{2}}}-\|\mathbf{A}R_{p}^{-1}e_{1}\|_{k}^{2}e_{1}.\nonumber
  \end{align}
  We will write for notational convenience
  \begin{align*}
    R_{p}Q_{jp}=:V_{jp}=\begin{pmatrix}
      V_{jp1}\\
      \vdots\\
      V_{jpk}
    \end{pmatrix}\textrm{, }\widehat{V}_{jp}=\begin{pmatrix}
      V_{jp2}\\
      \vdots\\
      V_{jpk}
    \end{pmatrix}\in\mathbb{R}^{k-1}
  \end{align*}
  and let $W_{p}\in\mathbb{R}^{k-1}$ be the $k-1$-dimensional vector that consists of the second until the $k$th coordinate of $-R_{p}\mathbf{A}^{2}R_{p}^{-1}e_{1}$. \\
  Leaving out the identity for the first coordinate of the vectors in (\ref{EquationsOfMotion5}), we then get, as all coordinates of $e_{1}$ save the first one are equal to zero,
  \begin{align}\label{EquationsOfMotion6}
    W_{p}=\sum\limits_{j=2}^{l}\frac{m_{j}\widehat{V}_{jp}}{(1-V_{jp1}^{2})^{\frac{3}{2}}}+\sum\limits_{j=l+1}^{n}\frac{m_{j}\widehat{V}_{jp}}{(1-V_{jp1}^{2})^{\frac{3}{2}}}.
  \end{align}
  Note that by construction $\|W_{p}\|_{k-1}\leq\|e_{1}\|_{k}=1$ and that
  \begin{align*}
    \left\|\sum\limits_{j=l+1}^{n}\frac{m_{j}\widehat{V}_{jp}}{(1-V_{jp1}^{2})^{\frac{3}{2}}}\right\|_{k-1}
  \end{align*}
  is bounded, because $\|\widehat{V}_{jp}\|_{k-1}\leq 1$ and for $j\geq l+1$ $1-V_{jp1}^{2}$ does not go to zero, because $V_{jp}-e_{1}$ does not go to zero for $j\geq l+1$. Thus, subtracting from both sides of (\ref{EquationsOfMotion6}) the term
  \begin{align*}
    \sum\limits_{j=l+1}^{n}\frac{m_{j}\widehat{V}_{jp}}{(1-V_{jp1}^{2})^{\frac{3}{2}}}
  \end{align*}
  we get
  \begin{align}\label{EquationsOfMotion7}
    W_{p}-\sum\limits_{j=l+1}^{n}\frac{m_{j}\widehat{V}_{jp}}{(1-V_{jp1}^{2})^{\frac{3}{2}}}=\sum\limits_{j=2}^{l}\frac{m_{j}\widehat{V}_{jp}}{(1-V_{jp1}^{2})^{\frac{3}{2}}}.
  \end{align}
  If we now take the squared norms of both sides of (\ref{EquationsOfMotion7}), we get
  \begin{align}\label{EquationsOfMotion8}
    \left\|W_{p}-\sum\limits_{j=l+1}^{n}\frac{m_{j}\widehat{V}_{jp}}{(1-V_{jp1}^{2})^{\frac{3}{2}}}\right\|_{k-1}^{2}=\left\|\sum\limits_{j=2}^{l}\frac{m_{j}\widehat{V}_{jp}}{(1-V_{jp1}^{2})^{\frac{3}{2}}}\right\|_{k-1}^{2}=\sum\limits_{i=2}^{l}\sum\limits_{j=2}^{l}\frac{m_{i}m_{j}\langle\widehat{V}_{ip},\widehat{V}_{jp}\rangle}{(1-V_{ip1}^{2})^{\frac{3}{2}}(1-V_{jp1}^{2})^{\frac{3}{2}}}.
  \end{align}
  If we then let $\alpha_{ijp}$ be the angle between $\widehat{V}_{ip}$ and $\widehat{V}_{jp}$, (\ref{EquationsOfMotion8}) gives
  \begin{align}\label{EquationsOfMotion9}
    \left\|W_{p}-\sum\limits_{j=l+1}^{n}\frac{m_{j}\widehat{V}_{jp}}{(1-V_{jp1}^{2})^{\frac{3}{2}}}\right\|_{k-1}^{2}=\sum\limits_{i=2}^{l}\sum\limits_{j=2}^{l}\frac{m_{i}m_{j}\|\widehat{V}_{ip}\|_{k-1}\|\widehat{V}_{jp}\|_{k-1}\cos{\alpha_{ijp}}}{(1-V_{ip1}^{2})^{\frac{3}{2}}(1-V_{jp1}^{2})^{\frac{3}{2}}}.
  \end{align}
  Note that $\|\widehat{V}_{jp}\|_{k-1}^{2}=V_{jp2}^{2}+...+V_{jpk}^{2}=1-V_{jp1}^{2}$ and that for $p$ large enough, $\cos{\alpha_{ijp}}>\frac{1}{2}$. Combining these facts with (\ref{EquationsOfMotion9}) then means
  \begin{align}\label{EquationsOfMotion10}
    \left\|W_{p}-\sum\limits_{j=l+1}^{n}\frac{m_{j}\widehat{V}_{jp}}{(1-V_{jp1}^{2})^{\frac{3}{2}}}\right\|_{k-1}^{2}\geq\frac{1}{2}\sum\limits_{i=2}^{l}\sum\limits_{j=2}^{l}\frac{m_{i}m_{j}}{(1-V_{ip1}^{2})(1-V_{jp1}^{2})}.
  \end{align}
  Now the left-hand side of (\ref{EquationsOfMotion10}) is bounded, yet the right-hand side of (\ref{EquationsOfMotion10}) is unbounded. This is a contradiction and completes our proof.
\end{proof}
%\section{Acknowledgements}
%The author is indebted to Florin Diacu and Dan Dai for all their advice.

\end{document}